\newtheorem{theorem}{Theorem}[section]
\newtheorem{proposition}[theorem]{Proposition}
\newtheorem{lemma}[theorem]{Lemma}
\newtheorem{definition}[theorem]{Definition}
\newcommand{\N}{\mathbb{N}}
\newcommand{\eps}{\varepsilon}
\title{Construction of fixed points of asymptotically nonexpansive mappings in uniformly convex hyperbolic spaces}
\author{Andrei Sipo\c s${}^{a,b}$\\[2mm]
\footnotesize ${}^a$Research Center for Logic, Optimization and Security (LOS), Department of Computer Science,\\
\footnotesize Faculty of Mathematics and Computer Science, University of Bucharest,\\
\footnotesize Academiei 14, 010014 Bucharest, Romania\\[1mm]
\footnotesize ${}^b$Simion Stoilow Institute of Mathematics of the Romanian Academy,\\
\footnotesize Calea Grivi\c tei 21, 010702 Bucharest, Romania\\[2mm]
\footnotesize E-mail: andrei.sipos@fmi.unibuc.ro\\
}
\date{}
\begin{document}

\maketitle

\begin{abstract}
Kohlenbach and Leu\c stean have shown in 2010 that any asymptotically nonexpansive self-mapping of a bounded nonempty $UCW$-hyperbolic space has a fixed point. In this paper, we adapt a construction due to Moloney in order to provide a sequence that converges strongly to such a fixed point.

\noindent {\em Mathematics Subject Classification 2010}: 47H09, 47H10, 47J25.

\noindent {\em Keywords:} Hyperbolic spaces, uniformly convex hyperbolic spaces, asymptotically nonexpansive mappings, fixed points.
\end{abstract}

\section{Introduction}

In 1972, Goebel and Kirk generalized \cite{GoeKir72} the classical Browder-G\"ohde-Kirk theorem to the class of {\it asymptotically nonexpansive mappings} (also introduced in that paper), which are mappings $T$ having the property that there is a $(k_n) \subseteq [0,\infty)$ with $\lim_{n \to \infty} k_n = 0$ such that for any $x$, $y$ in the domain of $T$ and any $n \in \N$,
$$d(T^nx,T^ny) \leq (1+k_n)d(x,y),$$ 
i.e. they showed that any self-mapping of a bounded closed convex nonempty subset of a uniformly convex Banach space with the above property has a fixed point.

In the recent decades, there has been a renewed interest in fixed point theory and convex optimization as practiced in nonlinear generalizations of the classical structures of functional analysis. For example, there exists in the literature a number of definitions of a notion of a `hyperbolic space' \cite{BH99,GoeKir83,GoeRei84,Kir82,ReiSha90,Tak70}, that aim to axiomatize the convexity structure of normed spaces. The kind of spaces that we shall employ here have a particularly flexible definition due to Kohlenbach \cite{Koh05} and are called {\it $W$-hyperbolic spaces} (see the next section for a definition and \cite[pp. 384--387]{Koh08} for a detailed discussion on the relationship between various definitions of hyperbolicity).

Uniform convexity, a property originally due to Clarkson \cite{Cla36}, was generalized to this hyperbolic setting (following \cite[p. 105]{GoeRei84}) by Leu\c stean in \cite{Leu07}. The subclass that is the most natural generalization of uniformly convex Banach spaces has then been identified with the one having a {\it monotone} modulus of uniform convexity. Those spaces have been called {\it $UCW$-hyperbolic spaces} in \cite{Leu10} (see also \cite{Leu14}), where  the corresponding Browder-G\"ohde-Kirk result was proven for spaces of this kind which are complete and nonempty. The Goebel-Kirk extension mentioned above for asymptotically nonexpansive mappings was obtained in the same setting by Kohlenbach and Leu\c stean in \cite{KohLeu10}.

Thirty years ago, Moloney showed \cite{Mol89,Mol94} (by refining an earlier result of Kaniel \cite{Kan71}) how to explicitly construct, for any asymptotically nonexpansive self-mapping of a bounded closed convex nonempty subset of a uniformly convex Banach space, a sequence that converges strongly to one of its fixed points. What we do in this paper is to show that this construction may be adapted to also work in a bounded nonempty $UCW$-hyperbolic space.

\section{Facts on hyperbolic spaces}

As stated in the Introduction, the following definition is due to Kohlenbach \cite{Koh05}.

\begin{definition}
A {\bf $W$-hyperbolic space} is a triple $(X,d,W)$ where $(X,d)$ is a metric space and $W: X^2 \times [0,1] \to X$ such that for all $x$, $y$, $z$, $w \in X$ and $\lambda$, $\mu \in [0,1]$, we have that
\begin{enumerate}[(W1)]
\item $d(z,W(x,y,\lambda)) \leq (1-\lambda)d(z,x) + \lambda d(z,y)$;
\item $d(W(x,y,\lambda),W(x,y,\mu) = |\lambda-\mu|d(x,y)$;
\item $W(x,y,\lambda)=W(y,x,1-\lambda)$;
\item $d(W(x,z,\lambda),W(y,w,\lambda))\leq(1-\lambda) d(x,y) + \lambda d(z,w)$.
\end{enumerate}
\end{definition}

Clearly, any normed space may be made into a $W$-hyperbolic space in a canonical way. As per \cite{Koh05,Leu07}, a particular nonlinear class of $W$-hyperbolic spaces is the one of CAT(0) spaces, introduced by A. Aleksandrov \cite{Ale51} and named as such by M. Gromov \cite{Gro87}.

A subset $C$ of a $W$-hyperbolic space $(X,d,W)$ is called {\it convex} if for any $x$, $y \in C$ and $\lambda \in [0,1]$, $W(x,y,\lambda) \in C$. If $(X,d,W$) is a $W$-hyperbolic space, $x$, $y \in X$ and $\lambda \in [0,1]$, we denote the point $W(x,y,\lambda)$ by $(1-\lambda)x+\lambda y$. We will mainly write $\frac{x+y}2$ for $\frac12 x + \frac12 y$. The following properties are immediate consequences of the definition of a $W$-hyperbolic space.

\begin{proposition}
Let $(X,d,W)$ be a $W$-hyperbolic space. Let $x$, $y \in X$ and $\lambda \in [0,1]$. Then we have:
\begin{enumerate}[(i)]
\item $1x + 0y = x$;
\item $0x + 1y = y$;
\item $(1-\lambda) x + \lambda x =x$;
\item $d(x,(1-\lambda)x+\lambda y) = \lambda d(x,y)$;
\item $d(y,(1-\lambda)x+\lambda y) = (1-\lambda) d(x,y)$.
\end{enumerate}
\end{proposition}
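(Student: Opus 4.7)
The plan is to dispatch each of the five assertions by specializing the axioms (W1)--(W4), in the order (i), (ii), (iii), (iv), (v), since the last two will use the first two.

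For (i), I would apply (W1) with the auxiliary point $z=x$ and parameter $\lambda=0$, which gives $d(x, W(x,y,0)) \leq d(x,x) + 0 \cdot d(x,y) = 0$, forcing $W(x,y,0)=x$. For (ii), the cleanest route is to combine (i) with (W3): $W(x,y,1) = W(y,x,0) = y$. (Alternatively, one can mimic the argument for (i) with $z=y$ and $\lambda=1$.) For (iii), I would again use (W1), now with $z=x$ and both endpoints equal to $x$: the upper bound collapses to $(1-\lambda)d(x,x)+\lambda d(x,x)=0$, giving $W(x,x,\lambda)=x$.

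For (iv) and (v), the idea is to view the endpoints $x$ and $y$ themselves as points of the form $W(x,y,\mu)$ and then use the exact distance formula (W2). Concretely, by (i), $x = W(x,y,0)$, so (W2) gives
\[
d(x, W(x,y,\lambda)) = d(W(x,y,0), W(x,y,\lambda)) = |0-\lambda|\,d(x,y) = \lambda d(x,y),
\]
proving (iv). Similarly, by (ii), $y = W(x,y,1)$, and (W2) yields
\[
d(y, W(x,y,\lambda)) = d(W(x,y,1), W(x,y,\lambda)) = |1-\lambda|\,d(x,y) = (1-\lambda)d(x,y),
\]
proving (v).

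There is no real obstacle here; the proposition is labeled as immediate and the only minor subtlety is noticing the right specialization of (W1) (taking $z$ to be one of the endpoints so that the upper bound vanishes) and the right use of (W2) (taking $\mu \in \{0,1\}$ so that one of the convex combinations degenerates to an endpoint identified via (i) or (ii)). Axiom (W4) is not needed for any of the five items.
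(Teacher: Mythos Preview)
Your proof is correct and complete; the paper itself omits the argument entirely, stating only that the properties are ``immediate consequences of the definition,'' and your derivations are exactly the standard ones one would supply.
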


\begin{definition}
If $(X,d,W)$ is a $W$-hyperbolic space, then a {\bf modulus of uniform convexity} for $(X,d,W)$ is a function $\eta :(0, \infty) \times (0,\infty) \to (0,1]$ such that for any $r$, $\eps >0$ and any $a$, $x$, $y \in X$ with $d(x,a) \leq r$, $d(y,a) \leq r$, $d(x,y) \geq \eps r$ we have that
$$d\left(\frac{x+y}2,a\right) \leq (1-\eta(r,\eps))r.$$
We call the modulus {\bf monotone} if for any $r$, $s$, $\eps>0$ with $s \leq r$, we have $\eta(r,\eps) \leq \eta(s,\eps)$.
\end{definition}

\begin{definition}
A {\bf $UCW$-hyperbolic space} is a quadruple $(X,d,W,\eta)$ where $(X,d,W)$ is a $W$-hyperbolic space and $\eta$ is a monotone modulus of uniform convexity for $(X,d,W)$.
\end{definition}

As remarked in \cite[Proposition 2.6]{Leu07}, CAT(0) spaces are $UCW$-hyperbolic spaces having as a modulus of uniform convexity the function $(r,\eps) \mapsto \frac{\eps^2}8$, quadratic in $\eps$. Note that a closed convex nonempty subset of a (complete) $(UC)W$-hyperbolic space is itself a (complete) nonempty $(UC)W$-hyperbolic space (in contrast to e.g. normed spaces).

The following is an adaptation of a result due to Kohlenbach and Leu\c stean, namely \cite[Lemma 3.2]{KohLeu09}.

\begin{proposition}\label{u-prop}
Let $(X,d,W,\eta)$ be a $UCW$-hyperbolic space. Define, for any $r$, $\eps > 0$, $u(r,\eps):= \frac{\eps}2 \cdot \eta(r,\eps)$. Then, for any $r$, $\eps>0$ and any $a$, $x$, $y \in X$ with $d(x,a) \leq d(y,a) \leq r$ and $d(x,y) \geq \eps r$ we have that
$$d\left(\frac{x+y}2,a\right) \leq d(y,a)- u(r,\eps) r.$$
In addition, if there is a function $\eta'$ which is nondecreasing in its second argument such that for all $r$ and $\eps$, $\eta(r,\eps)=\eps\eta'(r,\eps)$ (e.g. in the case of CAT(0) spaces, as per the above remark), then one can take $u$ to be simply $\eta$.
\end{proposition}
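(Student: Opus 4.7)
My plan is to apply the modulus of uniform convexity not at the nominal radius $r$ but at the actual radius $s := d(y,a) \leq r$. This way $s$ itself, rather than $r$, will appear on the right of the intermediate bound, matching the shape $d(y,a) - (\text{something})\cdot r$ sought in the conclusion. The discrepancy between the two scales $s$ and $r$ will be absorbed into the constant $u$.

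First I would use the triangle inequality to establish the lower bound $s \geq \eps r/2$. Since $d(x,a) \leq d(y,a) = s$, one has $\eps r \leq d(x,y) \leq d(x,a) + d(a,y) \leq 2s$. This has the pleasant side-effect that $s>0$, so $s$ is a legitimate argument for the modulus.

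Second, since $d(x,a), d(y,a) \leq s$ and $d(x,y) \geq \eps r \geq \eps s$ (using $s \leq r$), applying the definition of the modulus at scale $(s,\eps)$ gives
\[
d\!\left(\tfrac{x+y}{2},a\right) \leq (1-\eta(s,\eps))\,s.
\]
Monotonicity of $\eta$ in its first argument, together with $s \leq r$, yields $\eta(s,\eps) \geq \eta(r,\eps)$, so the right-hand side is bounded by $s - \eta(r,\eps)\,s$. Combining with $s \geq \eps r/2$ from the first step produces exactly $d(y,a) - u(r,\eps)\,r$.

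For the refinement under the factorization $\eta(r,\eps)=\eps\,\eta'(r,\eps)$ with $\eta'$ nondecreasing in its second argument, I would instead apply the modulus at scale $\bigl(s,\tfrac{\eps r}{s}\bigr)$---permissible since $d(x,y) \geq \eps r = \tfrac{\eps r}{s}\cdot s$. Expanding via the factorization and combining monotonicity of $\eta$ (which transfers to $\eta'$) in the first argument with the hypothesized monotonicity of $\eta'$ in the second argument (together with $\tfrac{\eps r}{s} \geq \eps$) gives $\eta\bigl(s,\tfrac{\eps r}{s}\bigr)\cdot s \geq \eta(r,\eps)\cdot r$, sharpening the conclusion to $u = \eta$. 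The only real subtlety in the whole argument is noticing the triangle-inequality lower bound $s \geq \eps r/2$; once that is in hand, the rest is direct use of the monotone modulus.
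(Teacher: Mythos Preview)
Your proof is correct and follows essentially the same route as the paper: both establish $d(y,a)\geq \eps r/2$ via the triangle inequality, apply the modulus at radius $s=d(y,a)$ with parameter $\eps$, then use monotonicity of $\eta$ in the first argument together with the lower bound on $s$ to obtain the conclusion; for the refined version, both apply the modulus at $\bigl(s,\tfrac{\eps r}{s}\bigr)$ and unwind via the factorization $\eta=\eps\,\eta'$ using the two monotonicity properties. The only cosmetic difference is that at the very end the paper passes through $\eta(d(y,a),\eps)$ rather than $\eta'(r,\eps)$, but the computation is the same.
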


\begin{proof}
Let $r$, $\eps>0$ and $a$, $x$, $y \in X$ be as required. First, note that
$$\frac{\eps r}2\leq\frac{d(x,y)}2 \leq\frac{d(x,a)+d(y,a)}2 \leq d(y,a),$$
so, using that $\eta$ is a monotone modulus of uniform convexity, we get that
$$d\left(\frac{x+y}2,a\right) \leq (1-\eta(d(y,a),\eps)) \cdot d(y,a) \leq (1-\eta(r,\eps)) \cdot d(y,a) \leq d(y,a) - \eta(r,a)\frac{\eps r}2 = d(y,a)- u(r,\eps)r.$$
The hypotheses imply that $d(y,a)\neq 0$, so we may write
$$d(x,y) \geq \eps r = \frac{\eps r}{d(y,a)} \cdot d(y,a),$$
and therefore, in the second case,
\begin{align*}
d\left(\frac{x+y}2,a\right) &\leq \left(1-\eta\left(d(y,a),\frac{\eps r}{d(y,a)}\right)\right) \cdot d(y,a) = d(y,a) - d(y,a) \cdot \frac{\eps r}{d(y,a)} \cdot \eta'\left(d(y,a),\frac{\eps r}{d(y,a)}\right)\\
&=d(y,a) - \eps r \eta'\left(d(y,a),\frac{\eps r}{d(y,a)}\right) \leq d(y,a) - \eps r \eta'(d(y,a),\eps) \\
&= d(y,a) - r \eta(d(y,a),\eps) \leq d(y,a) -  \eta(r,\eps)r.
\end{align*}
\end{proof}

\begin{definition}
Let $(X,d)$ be a metric space, $T:X\to X$ and $(k_n) \subseteq [0,\infty)$ such that $\lim_{n \to \infty} k_n = 0$. Then $T$ is called {\bf asymptotically nonexpansive} with respect to $(k_n)$ if for any $x$, $y \in X$ and any $n \in \N$,
$$d(T^nx,T^ny) \leq (1+k_n)d(x,y).$$
\end{definition}

For any self-mapping $T$ (of an arbitrary set), we denote the set of its fixed points by $Fix(T)$. In \cite{KohLeu10}, Kohlenbach and Leu\c stean have proved that any asymptotically nonexpansive self-mapping of a bounded complete nonempty $UCW$-hyperbolic space has a fixed point.

\section{Main results}

We fix a complete nonempty $UCW$-hyperbolic space $(X,d,W,\eta)$ and $b>0$ an upper bound for its diameter. Let $(k_n) \subseteq [0,\infty)$ be such that $\lim_{n \to \infty} k_n = 0$ and $T: X \to X$ be asymptotically nonexpansive with respect to $(k_n)$, so $Fix(T) \neq \emptyset$.

We shall construct a mapping $S: X \to X$ such that:
\begin{enumerate}[(i)]
\item $Fix(T)=Fix(S)$ (so $Fix(S) \neq \emptyset$);
\item for any $p \in Fix(S)$ and $x\in X$, $d(Sx,p)\leq d(x,p)$;
\item for any $(x_n) \subseteq X$ having $x \in X$ as its limit and with $\lim_{n\to\infty} d(x_n,Sx_n)=0$, we have $x \in Fix(S)$.
\end{enumerate}

Note that if $T$ is nonexpansive we may simply take $S:=T$.

\begin{lemma}\label{lm}
Let $x \in X$. Then for any $n \in \N$ there is an $m \in \{n,n+1\}$ such that
$$d(T^mx,x) \geq \frac1{2+k_1} d(Tx,x).$$
\end{lemma}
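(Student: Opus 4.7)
The plan is to proceed by contradiction, or equivalently by a case split, using the triangle inequality together with the asymptotic nonexpansivity applied with index $1$. The key observation is that one can estimate $d(Tx,x)$ from above using both $d(T^n x, x)$ and $d(T^{n+1}x,x)$ simultaneously. Indeed, inserting $T^{n+1}x$ and then writing $T^{n+1}x = T(T^n x)$ and $Tx = T(x)$ yields
\[
d(Tx, x) \leq d(Tx, T^{n+1}x) + d(T^{n+1}x, x) \leq (1+k_1)\, d(x, T^n x) + d(T^{n+1}x, x).
\]

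Now I would reason as follows. If $d(Tx,x)=0$, the conclusion is trivial (take any $m$, both sides are $\geq 0$). Otherwise, suppose for contradiction that both $d(T^n x, x) < \frac{1}{2+k_1} d(Tx,x)$ and $d(T^{n+1}x, x) < \frac{1}{2+k_1} d(Tx,x)$. Substituting into the inequality above gives
\[
d(Tx,x) < (1+k_1)\cdot \frac{1}{2+k_1}d(Tx,x) + \frac{1}{2+k_1}d(Tx,x) = \frac{(1+k_1)+1}{2+k_1}d(Tx,x) = d(Tx,x),
\]
which is absurd, so at least one of $m \in \{n, n+1\}$ works.

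There is really no obstacle here; the argument is a short triangle-inequality manipulation. The only subtlety to be careful of is to apply asymptotic nonexpansivity with the correct index, namely $1$ (bounding $d(T(x), T(T^n x))$ by $(1+k_1)d(x,T^n x)$), which is what produces exactly the constant $\frac{1}{2+k_1}$ in the statement. The choice of the denominator $2+k_1$ is pinned down precisely so that the contradiction is tight.
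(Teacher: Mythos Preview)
Your proof is correct and follows essentially the same approach as the paper: a contradiction argument using the triangle inequality $d(Tx,x)\leq d(T^{n+1}x,Tx)+d(T^{n+1}x,x)$ together with asymptotic nonexpansivity at index $1$ to bound $d(T^{n+1}x,Tx)\leq(1+k_1)d(T^nx,x)$. The only cosmetic difference is that you first record the combined inequality and then substitute the contradictory bounds, whereas the paper substitutes as it goes; your explicit handling of the trivial case $d(Tx,x)=0$ is a harmless addition.
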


\begin{proof}
Let $n \in \N$. Assume by way of contradiction that
$$d(T^nx,x) < \frac1{2+k_1} d(Tx,x)$$
and
$$d(T^{n+1}x,x) < \frac1{2+k_1} d(Tx,x).$$
Then, since
$$d(T^{n+1}x,Tx) \leq (1+k_1)d(T^nx,x) < \frac{1+k_1}{2+k_1}d(Tx,x),$$
we have that
$$d(Tx,x) \leq d(T^{n+1}x,x) + d(T^{n+1}x,Tx)  < \frac1{2+k_1} d(Tx,x) + \frac{1+k_1}{2+k_1}d(Tx,x) = d(Tx,x),$$
a contradiction.
\end{proof}

Let now $x \in X$. If $Tx=x$, put $Sx:=x$. If $Tx \neq x$, then put $n$ be minimal such that $k_n$ and $k_{n+1}$ are both smaller or equal than
$$\min\left(\frac{d(x,Tx)}{2b}, 2\eta\left(b,\frac{d(x,Tx)}{2b}\right)\right) > 0$$
(such an $n$ exists since $\lim_{n \to \infty} k_n = 0$). Then, making use of Lemma~\ref{lm}, put $m \in \{n,n+1\}$ be minimal such that
$$d(T^mx,x) \geq \frac1{2+k_1} d(Tx,x)$$
and set 
$$Sx:=\frac{T^mx + x}2.$$

The following proposition shows that $S$ has all the required properties.

\begin{proposition}
Let $x \in X$, $p \in Fix(S)$ and $(x_n) \subseteq X$. We have that:
\begin{enumerate}[(i)]
\item $d(Sx,x) \geq \frac1{2(2+k_1)} d(Tx,x)$;\label{p1}
\item $Fix(T)=Fix(S)$;
\item $d(Sx,p)\leq d(x,p)$;
\item if $x$ is the limit of $(x_n)$ and $\lim_{n\to\infty} d(x_n,Sx_n)=0$, we have $x \in Fix(S)$.
\end{enumerate}
\end{proposition}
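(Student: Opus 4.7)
I would tackle the four parts in order, since the last three are driven by (i) together with some case analysis.

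Part (i) falls straight out of the construction: if $Tx=x$ then $Sx=x$ and both sides are zero, and otherwise $Sx=\frac{T^mx+x}{2}$, so property (iv) of Proposition~2.2 gives $d(Sx,x)=\frac12 d(T^mx,x)\geq\frac{1}{2(2+k_1)}d(Tx,x)$ by Lemma~\ref{lm}. Part (ii) follows immediately: $Fix(T)\subseteq Fix(S)$ is built into the definition of $S$, and if $Sx=x$ then (i) forces $d(Tx,x)=0$. Part (iv) is also a corollary of (i): assuming $Tx\neq x$, the $(1+k_1)$-Lipschitz continuity of $T$ (from asymptotic nonexpansiveness at $n=1$) gives $d(Tx_n,x_n)\to d(Tx,x)>0$, so (i) applied to each $x_n$ yields $d(Sx_n,x_n)\geq\frac{1}{2(2+k_1)}d(Tx_n,x_n)$, which stays bounded away from zero and contradicts the hypothesis $d(x_n,Sx_n)\to 0$.

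The genuine work is in (iii). Fix $p\in Fix(T)$; the case $Tx=x$ forces $Sx=x$ and is trivial, so assume $Tx\neq x$ and $Sx=\frac{T^mx+x}{2}$. I would split on whether $d(T^mx,p)\leq d(x,p)$. If so, axiom (W1) alone delivers
\[
d(Sx,p)\leq \tfrac12 d(x,p)+\tfrac12 d(T^mx,p)\leq d(x,p),
\]
with no uniform convexity needed. In the complementary case $d(T^mx,p)>d(x,p)$, the overshoot satisfies $d(T^mx,p)-d(x,p)\leq k_m d(x,p)$, and the plan is to absorb it using Proposition~\ref{u-prop} applied with $a=p$, far point $y=T^mx$, near point $x$, and a suitable $r\leq b$ and $\eps>0$ drawing on the positive lower bound $d(x,T^mx)\geq\frac{1}{2+k_1}d(x,Tx)$ from Lemma~\ref{lm}. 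This yields $d(Sx,p)\leq d(T^mx,p)-u(r,\eps)\,r$, so the conclusion reduces to the inequality $u(r,\eps)\,r\geq k_m d(x,p)$.

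The step I expect to require the most care is exactly this final verification: the two-part bound
\[
k_m\leq\min\!\left(\frac{d(x,Tx)}{2b},\;2\eta\!\left(b,\frac{d(x,Tx)}{2b}\right)\right)
\]
baked into the definition of $n$ is tailored precisely so that, after invoking monotonicity of $\eta$ in its first argument (and, without loss of generality, taking $\eta$ to be nondecreasing in its second argument, via the pointwise supremum over smaller $\eps'$, which is easily seen to yield another monotone modulus), the product $u(r,\eps)\,r=\frac{\eps r}{2}\eta(r,\eps)$ dominates $k_m d(x,p)$. Assembling these ingredients closes (iii) and hence the whole proposition.
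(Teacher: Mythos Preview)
Your arguments for (i), (ii), and (iv) are correct and match the paper's.

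For (iii), however, your plan via Proposition~\ref{u-prop} does not close. The function $u(r,\eps)=\frac{\eps}{2}\eta(r,\eps)$ carries an extra factor of $\eps/2$ compared to $\eta$ itself, and the threshold
\[
k_m\le\min\!\left(\frac{d(x,Tx)}{2b},\;2\eta\!\Bigl(b,\tfrac{d(x,Tx)}{2b}\Bigr)\right)
\]
built into the construction of $S$ is calibrated to guarantee $\eta\ge k_m/2$, not $u\ge k_m$. Concretely: with $r=b$ and the natural $\eps$ (of order $d(x,Tx)/b$), your target inequality becomes $\tfrac{\eps}{2}\eta(b,\eps)\ge k_m$, whereas the hypotheses only give $k_m\lesssim\eps$ and $k_m\le 2\eta(b,\cdot)$ separately; when the first of these is the binding constraint (so $k_m\approx\eps$) you would need $\eta(b,\eps)\gtrsim 2$, impossible since $\eta\le1$. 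Shrinking $r$ or making $\eta$ monotone in $\eps$ does not rescue this, because the loss is multiplicative in $\eps$ and $\eps$ is genuinely small.

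The paper avoids Proposition~\ref{u-prop} here entirely. In the case $d(T^mx,p)>d(x,p)=:q$ it writes $d(T^mx,p)=(1+\beta)q$ with $\beta\in[0,k_m]$, introduces the auxiliary point
\[
y:=\frac{1}{1+\beta}T^mx+\frac{\beta}{1+\beta}p
\]
lying on the segment from $T^mx$ to $p$ with $d(y,p)=q$ exactly, and applies the modulus $\eta$ \emph{directly} to the symmetric pair $x,y$ (both at distance $q$ from $p$). This yields $d\bigl(\tfrac{x+y}{2},p\bigr)\le(1-\eta(q,\eps))q$, and then one corrects to $\tfrac{x+T^mx}{2}$ at the cost of at most $\tfrac12 d(T^mx,y)=\tfrac{q\beta}{2}\le\tfrac{qk_m}{2}$. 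So one only needs $\eta(q,\eps)\ge k_m/2$, which is precisely what the second bound on $k_m$ (plus monotonicity in the first argument) provides. The symmetrization trick, not Proposition~\ref{u-prop}, is the missing idea in your sketch.
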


\begin{proof}
\begin{enumerate}[(i)]
\item If $Tx=x$, there is nothing to show. If $Tx \neq x$, then, by putting $m$ to be the one from the construction of $Sx$, we have that $Sx=\frac{T^mx + x}2$, so $d(Sx,x) = \frac12 d(T^mx,x) \geq \frac1{2(2+k_1)} d(Tx,x)$.
\item The inclusion $Fix(T)\subseteq Fix(S)$ follows by the construction of $S$; $Fix(S)\subseteq Fix(T)$ follows by (\ref{p1}).
\item We have that $p \in Fix(T)$. If $Tx=x$, there is nothing to show. Suppose, then, that $Tx \neq x$, so, again by putting $m$ to be the one from the construction of $Sx$, we have that $Sx=\frac{T^mx + x}2$.

Put $c:=\frac1{(2+k_1)}d(Tx,x)$, $q:=d(x,p)$ and $\eps:=\frac{c}{2b}$. Since $m$ was chosen such that $k_m \leq \eps$, we have that
$$b(k_m + \eps) \leq 2b\eps = c.$$
Therefore, since $q \leq b$,
$$d(T^mx,x) \geq c \geq b(k_m+\eps) \geq q(k_m +\eps),$$
so $d(T^mx,x) - qk_m \geq q\eps$. In addition, using the monotonicity of $\eta$ and (again) the way $m$ was chosen,
$$\eta(q,\eps) \geq \eta(b,\eps) \geq k_m/2.$$

If $d(T^mx,p)<q$, then
$$d(Sx,p) = d\left(\frac{T^mx + x}2,p\right) \leq \frac12 d(T^mx,p) + \frac12 d(x,p) \leq q.$$
Consider now the case where $d(T^mx,p) \geq q$. Then $d(T^mx,p) \leq (1+k_m) d(x,p) = (1+k_m)q$, so there is a $\beta \in [0,k_m]$ with $d(T^mx,p)=(1+\beta)q$. Put
$$y:=\frac1{1+\beta}T^m x+ \frac\beta{1+\beta} p.$$
Then
$$d(T^mx, y) = \frac\beta{1+\beta} d(T^mx,p) = q\beta \leq qk_m,$$
so
$$d(T^mx,x) \leq d(x,y) + d(T^mx,y) = d(x,y) + qk_m$$
and thus
$$d(x,y) \geq d(T^mx,x) - qk_m \geq q\eps.$$
On the other hand, we have that $d(x,p)=q$ and
$$d(y,p) = \frac1{1+\beta} d(T^mx,p) = q,$$
so, since $\eta$ is a modulus of uniform convexity,
$$d\left(\frac{x+y}2,p\right) \leq (1-\eta(q,\eps))q.$$
Then
\begin{align*}
d(Sx,p) &= d\left(\frac{T^mx + x}2,p\right) \leq d\left(\frac{x+y}2,p\right) + d\left(\frac{x+y}2,\frac{x+T^mx}2\right)\\
&\leq (1-\eta(q,\eps))q + \frac12 d(T^mx,y) \leq \left(1-\frac{k_m}2\right)q + \frac{qk_m}2 = q.
\end{align*}

\item From (\ref{p1}), we get that $\lim_{n\to\infty} d(x_n,Tx_n)=0$, then, by the continuity of $T$, we get $x \in Fix(T) = Fix(S)$.
\end{enumerate}
\end{proof}

Given $S$ with these properties and $x \in X$, we will now construct a sequence converging to a fixed point of $S$.

For any $j \geq 1$, we shall set an $m_j \geq 1$ and a finite sequence $(z_{ij})_{i=1}^{m_j}$, and we shall put $y_j:=z_{m_jj}$. The sequence $(y_j)$ will be the one we are after.

Put $m_1:=1$ and $z_{11}:=x$. Assume now that we have constructed the $j$th finite sequence and we are seeking the next one. We distinguish two cases.\\[2mm]

{\bf Construction case I.} There is an $i \in [2,m_j-1]$ such that $d(z_{ij},z_{(i+1)j}) < d(z_{ij},z_{(i-1)j})$ and $d\left(z_{ij},\frac{z_{ij}+Sz_{m_jj}}2\right) \geq d(z_{ij},z_{(i-1)j})$.\\[1mm]

Let $i$ be minimal with this property. Then put $m_{j+1}:=i+1$, put for all $k \leq i$, $z_{k(j+1)}:=z_{kj}$ and $y_{j+1}=z_{(i+1)(j+1)}:=\frac{z_{ij}+Sz_{m_jj}}2$.\\[2mm]

{\bf Construction case II.} There is no such $i$.\\[1mm]

In this case, put $m_{j+1}:=m_j+1$, put for all $k \leq m_j$, $z_{k(j+1)}:=z_{kj}$ and $z_{(m_j+1)(j+1)}:=\frac{z_{m_jj}+Sz_{m_jj}}2$.\\[2mm]

It is immediate that $m_2=2$ and $m_3=3$. By a simple induction, it follows that for all $j \geq 3$, $m_j \geq 3$.

\begin{lemma}\label{l4}
Let $p \in Fix(S)$, $j \in \N$ and $i \in [1,m_j-1]$. Then $d(z_{(i+1)j},p) \leq d(z_{ij},p)$.
\end{lemma}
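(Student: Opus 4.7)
The plan is to proceed by induction on $j$, with the base case $j=1$ being vacuous since $m_1=1$ makes the index range $[1, m_1-1]$ empty. For the inductive step, assume the claim holds at stage $j$ and fix $i \in [1, m_{j+1}-1]$. There are two regimes to handle.

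\textbf{Regime A (inherited pairs):} If $i+1 < m_{j+1}$, then by inspection of both construction cases the points $z_{i(j+1)}$ and $z_{(i+1)(j+1)}$ agree with $z_{ij}$ and $z_{(i+1)j}$ respectively (in Construction case I because $i+1 \le i_0 \le m_j - 1$, where $i_0$ denotes the chosen minimal index, and in Construction case II because $i+1 \le m_j$). Hence the inequality is immediate from the inductive hypothesis at stage $j$.

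\textbf{Regime B (the newly added terminal point):} Suppose $i+1 = m_{j+1}$, so that $z_{(i+1)(j+1)}$ is the freshly constructed midpoint. In Construction case II one has $z_{i(j+1)} = z_{m_j j}$ and $z_{(i+1)(j+1)} = \frac{z_{m_j j} + S z_{m_j j}}{2}$; applying axiom (W1) and property (ii) of $S$ gives
$$d\left(\tfrac{z_{m_j j} + S z_{m_j j}}{2}, p\right) \leq \tfrac{1}{2} d(z_{m_j j},p) + \tfrac{1}{2} d(S z_{m_j j},p) \leq d(z_{m_j j},p).$$
In Construction case I, setting $i_0 := m_{j+1}-1$, we have $z_{i(j+1)} = z_{i_0 j}$ and $z_{(i+1)(j+1)} = \frac{z_{i_0 j} + S z_{m_j j}}{2}$, so (W1) together with property (ii) of $S$ yields
$$d\left(\tfrac{z_{i_0 j} + S z_{m_j j}}{2}, p\right) \leq \tfrac{1}{2} d(z_{i_0 j},p) + \tfrac{1}{2} d(z_{m_j j},p).$$
The remaining task is to bound $d(z_{m_j j}, p) \leq d(z_{i_0 j}, p)$, which follows by telescoping the inductive hypothesis at stage $j$ along the indices $i_0, i_0+1, \ldots, m_j$.

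Nothing here is delicate — the only care needed is the index bookkeeping in Regime A to confirm that the inherited points really do coincide with their stage-$j$ counterparts, and in Regime B the invocation of the inductive hypothesis in telescoped form. The convexity axiom (W1) and the fact that $S$ is nonexpansive at fixed points (property (ii)) carry all the metric content; the modulus $\eta$ plays no role in this lemma.
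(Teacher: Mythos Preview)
Your proof is correct and follows essentially the same approach as the paper's: induction on $j$ with a vacuous base case, the inherited case handled by the induction hypothesis, and the terminal case handled via (W1), property (ii) of $S$, and the telescoped induction hypothesis $d(z_{m_jj},p)\leq d(z_{ij},p)$. The only cosmetic difference is that the paper treats your Regime~B uniformly (observing that in both construction cases $z_{i(j+1)}=z_{ij}$ and $z_{(i+1)(j+1)}=\frac{z_{ij}+Sz_{m_jj}}{2}$), whereas you split into Construction cases I and II explicitly.
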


\begin{proof}
We prove this by induction on $j$. If $j=1$, the property holds vacuously. Suppose now that the property holds for $j$ and we want to prove it for $j+1$. If $i<m_{j+1}-1$, $z_{(i+1)(j+1)}=z_{(i+1)j}$ and $z_{i(j+1)}=z_{ij}$, so we simply apply the induction hypothesis. If $i=m_{j+1}-1$, then $z_{i(j+1)}=z_{ij}$ and $z_{(i+1)(j+1)} = \frac{z_{ij}+Sz_{m_jj}}2$, so
\begin{align*}
d(z_{(i+1)(j+1)},p) &= d\left(\frac{z_{ij}+Sz_{m_jj}}2,p\right) \leq \frac12d(z_{ij},p) + \frac12d(Sz_{m_jj},p) \leq \frac12d(z_{ij},p) + \frac12d(z_{m_jj},p) \\
&\leq \frac12d(z_{ij},p) + \frac12d(z_{ij},p) = d(z_{ij},p) = d(z_{i(j+1)},p).
\end{align*}
\end{proof}

\begin{lemma}\label{l3}
Let $p \in Fix(S)$, $\eps>0$, $j \in \N$ and $i \in [1,m_j-1]$. Let $u$ be such that the property described by Proposition~\ref{u-prop} holds. Assume that $d(z_{ij},z_{(i+1)j}) \geq \eps$. Then $ d(z_{ij},p) - d(z_{(i+1)j},p) \geq u\left(b,\frac\eps b\right)b$.
\end{lemma}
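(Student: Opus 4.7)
The plan is to prove the lemma by induction on $j$, with the substantive step reduced to a single application of Proposition~\ref{u-prop}. The base case $j = 1$ is vacuous, since $m_1 = 1$ makes the index range $[1, m_1 - 1]$ empty. For the inductive step, fix $i \in [1, m_{j+1} - 1]$ satisfying $d(z_{i(j+1)}, z_{(i+1)(j+1)}) \geq \eps$, and distinguish two cases according to whether $z_{(i+1)(j+1)}$ was carried over from step $j$ or freshly created.

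If $i + 1 < m_{j+1}$, then by construction $z_{k(j+1)} = z_{kj}$ for all $k \leq i + 1$, so the induction hypothesis applied at index $i$, step $j$, gives the conclusion verbatim. The interesting case is $i + 1 = m_{j+1}$, the top index at step $j+1$. I would then unfold both Construction cases used to produce it. If Case~I was used with internal parameter $i^* \in [2, m_j - 1]$, then $m_{j+1} = i^* + 1$, so $i = i^*$, and $z_{(i+1)(j+1)} = \frac{z_{ij} + Sz_{m_jj}}{2}$; setting $w := Sz_{m_jj}$, Lemma~\ref{l4} chained from index $i$ up to $m_j$ together with the second defining property of $S$ yields $d(w, p) \leq d(z_{m_jj}, p) \leq d(z_{ij}, p)$. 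If Case~II was used, then $i = m_j$ and $z_{(i+1)(j+1)} = \frac{z_{ij} + Sz_{ij}}{2}$; setting $w := Sz_{ij}$, the nonexpansiveness of $S$ at $p$ directly gives $d(w, p) \leq d(z_{ij}, p)$. In both subcases $z_{i(j+1)} = z_{ij}$, and $z_{(i+1)(j+1)} = \frac{z_{ij} + w}{2}$ with $d(w, p) \leq d(z_{ij}, p) \leq b$.

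Now the midpoint identity from axiom (W2) gives $d(z_{ij}, w) = 2\, d(z_{ij}, z_{(i+1)(j+1)}) \geq 2\eps$, in particular $d(z_{ij}, w) \geq (\eps/b) \cdot b$. Applying Proposition~\ref{u-prop} with $a := p$, $y := z_{ij}$, $x := w$, $r := b$, and $\eps/b$ playing the role of its epsilon-parameter, all three hypotheses are in place, and one obtains
\[
d(z_{(i+1)(j+1)}, p) \leq d(z_{ij}, p) - u(b, \eps/b) \cdot b = d(z_{i(j+1)}, p) - u(b, \eps/b) \cdot b,
\]
which closes the induction.

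The main obstacle is not a single sharp inequality but rather the bookkeeping: identifying, for each of the two Construction cases, the explicit form of the newly created top point $z_{m_{j+1}(j+1)}$, and verifying in each subcase that the ``other endpoint'' $w$ of the midpoint is no farther from $p$ than $z_{ij}$ is. Once that structural observation is in place, the combined effect of uniform convexity and the nonexpansiveness of $S$ on fixed points (both encoded in Proposition~\ref{u-prop}) closes the argument in one step.
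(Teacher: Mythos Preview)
Your proof is correct and follows essentially the same route as the paper: induction on $j$, vacuous base case, the inner indices handled by the induction hypothesis, and the top index $i=m_{j+1}-1$ settled by one application of Proposition~\ref{u-prop} after bounding $d(Sz_{m_jj},p)$ via Lemma~\ref{l4} and the defining property of $S$. The only cosmetic difference is that you split the top-index case into Construction cases I and II, whereas the paper observes at once that in both cases $z_{(i+1)(j+1)}=\frac{z_{ij}+Sz_{m_jj}}2$ (since in Case~II one has $i=m_j$), so the two subcases collapse.
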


\begin{proof}
We prove this by induction on $j$. If $j=1$, the property holds vacuously. Suppose now that the property holds for $j$ and we want to prove it for $j+1$. If $i<m_{j+1}-1$, $z_{(i+1)(j+1)}=z_{(i+1)j}$ and $z_{i(j+1)}=z_{ij}$, so we simply apply the induction hypothesis. If $i=m_{j+1}-1$, then $z_{i(j+1)}=z_{ij}$ and $z_{(i+1)(j+1)} = \frac{z_{ij}+Sz_{m_jj}}2$. Using Lemma~\ref{l4}, we have that
$$d(Sz_{m_jj},p) \leq d(z_{m_jj},p) \leq d(z_{ij},p) \leq b.$$
In addition,
$$d(z_{ij},Sz_{m_jj}) \geq \frac{d(z_{ij},Sz_{m_jj})}2 = d(z_{ij},z_{(i+1)(j+1)}) = d(z_{i(j+1)},z_{(i+1)(j+1)}) \geq \eps = \frac\eps b \cdot b,$$
so, applying Proposition~\ref{u-prop},
$$d(z_{(i+1)(j+1)},p) \leq d(z_{i(j+1)},p) - u\left(b,\frac\eps b\right)b.$$
\end{proof}

We shall now construct a sequence $p_1 < p_2 < \ldots$ such that for any $k \geq 1$, $m_{p_k}=k$ (so $z_{kp_k}=y_{p_k}$) and for all $j \geq p_k+1$, $m_j \geq k+1$ and $z_{kj} = z_{kp_k}$, and $p_k$ is optimal in this regard, i.e. either $z_{k(p_k-1)}$ is not defined or $z_{k(p_k-1)} \neq z_{kp_k}$ (which makes it uniquely determined). We shall denote, for all $k \geq 1$, $x_k:=y_{p_k}$. We will also show that for all $k$, $d(x_k,x_{k+1}) \geq d\left(x_k, \frac{x_k+Sx_k}2\right)$, i.e. $2d(x_k,x_{k+1}) \geq d(x_k,Sx_k)$.

It is clear that one must have $p_1:=1$ (so $x_1=x$). Assume that we have constructed the sequence up to $p_k$ and we want to find the value of $p_{k+1}$.

We know that $m_{p_k+1} \geq k+1$, but since $m_{p_k+1} \leq m_{p_k}+1 = k+1$, $m_{p_k+1} = k+1 = m_{p_k}+1$. Thus the $(p_k+1)$th line was obtained using Construction case II, so
$$z_{(k+1)(p_k+1)} =\frac{x_k+Sx_k}2.$$

In the case where for all $t \geq p_k+1$, $z_{(k+1)t}=z_{(k+1)(p_k+1)}$, in order to simply put $p_{k+1}:=p_k+1$, we must also show that for all $j \geq p_k+2$, $m_j \geq k+2$. Assume that there is a $j \geq p_k+2$ with $m_j < k+2$, i.e. $m_j=k+1$. Since $m_{j-1} \geq k+1$, the $j$th sequence must have necessarily been obtained via Construction case I with $i=k$, so
\begin{equation}\label{e1}
d(z_{k(j-1)},z_{(k+1)(j-1)}) < d(z_{k(j-1)},z_{(k-1)(j-1)}),
\end{equation}
\begin{equation}\label{e2}
d\left(z_{k(j-1)},\frac{z_{k(j-1)} + Sz_{m_{j-1}(j-1)}}2\right) \geq d(z_{k(j-1)},z_{(k-1)(j-1)}),
\end{equation}
and $z_{(k+1)j}=\frac{z_{k(j-1)} + Sz_{m_{j-1}(j-1)}}2$. Since, by our assumption, $z_{(k+1)j} = z_{(k+1)(p_k+1)} = z_{(k+1)(j-1)}$, we have that \eqref{e2} yields
$$d(z_{k(j-1)},z_{(k+1)(j-1)}) \geq d(z_{k(j-1)},z_{(k-1)(j-1)}),$$
which contradicts \eqref{e1}. In this case $x_{k+1}=z_{(k+1)(p_k+1)} =\frac{x_k+Sx_k}2$, so $d(x_k,x_{k+1}) = d\left(x_k, \frac{x_k+Sx_k}2\right)$.

Assume now that there is a $t \geq p_k+2$ with $z_{(k+1)t}\neq z_{(k+1)(p_k+1)}$ and take it to be minimal ({\it a posteriori} it will be unique). Then, since $m_{t-1} \geq k+1$, we have that the $t$th sequence must have been obtained via Construction case I with $i=k$, so $m_t=k+1$,
\begin{equation}\label{estar}
d(z_{k(t-1)},z_{(k+1)(t-1)}) \geq d(z_{k(t-1)},z_{(k-1)(t-1)})
\end{equation}
and
\begin{equation}\label{e3}
d\left(z_{k(t-1)},\frac{z_{k(t-1)} + Sz_{m_{t-1}(t-1)}}2\right) \geq d(z_{k(t-1)},z_{(k-1)(t-1)}).
\end{equation}
For all $s \leq k$, $t-1 \geq p_k \geq p_s$, so $z_{st}=z_{s(t-1)}$, and since $z_{(k+1)t} = \frac{z_{k(t-1)} + Sz_{m_{t-1}(t-1)}}2$, \eqref{e3} yields
\begin{equation}\label{e4}
d(z_{kt},z_{(k+1)t}) \geq d(z_{kt},z_{(k-1)t}).
\end{equation}
We will now show that for all $j \geq t+1$, $m_j \geq k+2$ and $z_{(k+1)j}=z_{(k+1)t}$, so we may put $p_{k+1}:=t$. Start with $j:=t+1$. Suppose that $m_{t+1} < k+2$, i.e. $m_{t+1}=k+1$. Then the $(t+1)$th sequence must have been obtained via Construction case I with $i=k$, so $d(z_{kt},z_{(k+1)t}) < d(z_{kt},z_{(k-1)t})$, which contradicts \eqref{e4}. Since then $m_{t+1} \geq k+2 = m_t+1$, the $(t+1)$th sequence must have been obtained via Construction case II, so $z_{(k+1)(t+1)}=z_{(k+1)t}$. Assume now that the property to be proven holds for the $j$th sequence and we want to prove it for the next one. By the induction hypothesis, $z_{(k+1)j}=z_{(k+1)t}$. Suppose that $m_{j+1} < k+2$, i.e. $m_{j+1}=k+1$. Then the $(j+1)$th sequence must have been obtained via Construction case I with $i=k$, so
\begin{equation}\label{e5}
d(z_{kj},z_{(k+1)j}) < d(z_{kj},z_{(k-1)j}).
\end{equation}
We have that for all $s \leq k$, $j \geq p_k \geq p_s$, so $z_{sj}=z_{st}$, and since, as stated before, $z_{(k+1)j}=z_{(k+1)t}$, \eqref{e5} yields that $d(z_{kt},z_{(k+1)t}) < d(z_{kt},z_{(k-1)t})$, which contradicts \eqref{e4}. Since then $m_{j+1} \geq k+2$, the $(j+1)$th sequence must have been obtained via Construction case I with $i \geq k+1$ or via Construction case II, so $z_{(k+1)(j+1)}=z_{(k+1)j}=z_{(k+1)t}$. In addition, by the minimality of $t$, we get that
$$z_{(k+1)(t-1)}= z_{(k+1)(p_k+1)} = \frac{x_k+Sx_k}2,$$
so \eqref{estar} yields that
$$d\left(x_k, \frac{x_k+Sx_k}2\right) < d(x_k,x_{k-1}),$$
while \eqref{e4} means that
$$d(x_k,x_{k+1}) \geq  d(x_k,x_{k-1}),$$
so
$$d(x_k,x_{k+1}) > d\left(x_k, \frac{x_k+Sx_k}2\right).$$

We have now finished constructing the $p_k$'s.

\begin{lemma}\label{l9}
Let $p \in Fix(S)$ and $k \in \N$. Then for all $n \geq p_k$, $d(y_n,p) \leq d(x_k,p)$.
\end{lemma}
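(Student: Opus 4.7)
The plan is a short monotonicity argument built on top of Lemma~\ref{l4} and the defining property of the sequence $(p_k)$. The key structural facts I will use are: (a) $m_{p_k} = k$, so $y_{p_k} = z_{m_{p_k} p_k} = z_{k p_k}$, and by definition $x_k = y_{p_k}$, hence $z_{k p_k} = x_k$; (b) for every $j \geq p_k + 1$ we have $m_j \geq k+1$ and $z_{k j} = z_{k p_k} = x_k$. Combining (a) and (b), we get $z_{k n} = x_k$ for every $n \geq p_k$.

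For the case $n = p_k$, the conclusion is immediate, since $y_n = y_{p_k} = x_k$, so $d(y_n,p) = d(x_k,p)$.

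For the case $n \geq p_k + 1$, we have $m_n \geq k+1$, so the index $k$ lies in the range $[1, m_n - 1]$ where Lemma~\ref{l4} applies with $j := n$. Iterating Lemma~\ref{l4} gives
\begin{align*}
d(y_n,p) = d(z_{m_n n},p) \leq d(z_{(m_n-1) n},p) \leq \cdots \leq d(z_{k n},p) = d(x_k,p),
\end{align*}
which is what we want. The only thing to check is that the chain terminates at index $k$, which follows because $k \leq m_n - 1 < m_n$, so we can walk down from $m_n$ to $k$ in finitely many steps, each step bounded by Lemma~\ref{l4}. There is no real obstacle here: the whole content is just that the $y_n$'s are (in the $d(\cdot,p)$-sense) obtained from $x_k$ by moving along the column $(z_{i n})_{i \geq k}$, which is nonincreasing in $i$.
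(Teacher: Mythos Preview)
Your proof is correct and follows essentially the same approach as the paper: both use that $z_{kn}=x_k$ for $n\geq p_k$ and then apply Lemma~\ref{l4} along the column $(z_{in})_i$ from $i=k$ up to $i=m_n$ to conclude $d(y_n,p)=d(z_{m_nn},p)\leq d(z_{kn},p)=d(x_k,p)$. The paper just phrases this more tersely (not splitting off the case $n=p_k$ and not spelling out the iteration), but the argument is the same.
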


\begin{proof}
Let $n \geq p_k$, so $m_n \geq k$. By Lemma~\ref{l4}, $d(z_{m_nn},p) \leq d(z_{kn},p)$. On the other hand $z_{m_nn}=y_n$ and since $n \geq p_k$, $z_{kn}=z_{kp_k}=x_k$, so the conclusion follows.
\end{proof}

\begin{lemma}\label{l2}
Let $p \in Fix(S)$, $\eps>0$ and $i \in \N$. Let $u$ be such that the property described by Proposition~\ref{u-prop} holds. Assume that $d(x_i,x_{i+1}) \geq \eps$. Then $ d(x_i,p) - d(x_{i+1},p) \geq u\left(b,\frac\eps b\right)b$.
\end{lemma}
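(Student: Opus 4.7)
The plan is to do a case analysis based on how $x_{i+1}$ is produced by the construction of the sequence $(p_k)$ preceding Lemma~\ref{l9}, and in each case reduce to a direct application of Proposition~\ref{u-prop}. The crucial observation is that in both cases $x_{i+1}$ can be written as $\frac{x_i + w}{2}$ for some $w \in X$ satisfying $d(w,p) \leq d(x_i,p)$. If $p_{i+1} = p_i + 1$, then $x_{i+1} = \frac{x_i + Sx_i}{2}$ directly from the construction, and property (ii) of $S$ gives $d(Sx_i, p) \leq d(x_i, p)$. Otherwise $p_{i+1} = t$ for some $t \geq p_i + 2$; unpacking the construction together with the fact (used repeatedly in the text above) that for $s \leq i$ and $n \geq p_s$ one has $z_{sn} = x_s$, one obtains $x_{i+1} = z_{(i+1)t} = \frac{x_i + Sy_{t-1}}{2}$. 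Then property (ii) of $S$ followed by Lemma~\ref{l9} applied with $k := i$ and $n := t-1 \geq p_i$ gives $d(Sy_{t-1}, p) \leq d(y_{t-1}, p) \leq d(x_i, p)$.

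Once $w$ is fixed, I would combine the hypothesis $d(x_i, x_{i+1}) \geq \eps$ with the midpoint identity $d(x_i, \frac{x_i + w}{2}) = \frac{1}{2} d(x_i, w)$ (immediate from (W2)) to obtain $d(x_i, w) \geq 2\eps \geq \eps$. Next I would invoke Proposition~\ref{u-prop} with $a := p$, $r := b$, second modulus argument $\frac{\eps}{b}$, and the roles of $x$, $y$ played by $w$ and $x_i$ respectively, which is legitimate since $d(w,p) \leq d(x_i,p) \leq b$ and $d(w, x_i) \geq \eps = \frac{\eps}{b} \cdot b$. Using (W3) to identify $\frac{w + x_i}{2}$ with $x_{i+1} = \frac{x_i + w}{2}$, the conclusion of the proposition reads $d(x_{i+1}, p) \leq d(x_i, p) - u(b, \eps/b)\, b$, which is precisely the required inequality.

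The main obstacle is the first paragraph, namely unpacking $x_{i+1}$ from the intricate construction in the case $p_{i+1} > p_i + 1$, where one must carefully track the minimality of $t$ and the identities $z_{i(t-1)} = x_i$ and $z_{m_{t-1}(t-1)} = y_{t-1}$ in order to pin down the inner argument of $S$ and ensure the resulting point lies within distance $d(x_i,p)$ of $p$. Once this identification is made, the estimate is essentially the same application of Proposition~\ref{u-prop} as in the proof of Lemma~\ref{l3}, merely specialized to the subsequence $(x_k)$ via Lemma~\ref{l9}.
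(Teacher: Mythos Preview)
Your argument is correct, but the paper's proof is much shorter and takes a different route: it simply sets $j:=p_{i+1}$, observes that $m_j=i+1$, identifies $z_{ij}=x_i$ (since $j\geq p_i$) and $z_{(i+1)j}=x_{i+1}$, and then applies Lemma~\ref{l3} directly. No case analysis on how $p_{i+1}$ arose is needed, and neither Lemma~\ref{l9} nor the explicit midpoint representation of $x_{i+1}$ is invoked.

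Your approach instead unpacks the construction to exhibit $x_{i+1}=\frac{x_i+w}{2}$ with $d(w,p)\leq d(x_i,p)$ (via property~(ii) of $S$ and Lemma~\ref{l9}), and then applies Proposition~\ref{u-prop} by hand. This is essentially re-deriving the relevant instance of Lemma~\ref{l3} from scratch, which explains the extra work in your first paragraph. The benefit is that your proof is self-contained and makes the role of $S$ and the midpoint structure explicit; the cost is the case split and the careful tracking of the construction in the case $p_{i+1}>p_i+1$. The paper's approach buys brevity by recognising that $(x_i,x_{i+1})$ is literally a consecutive pair $(z_{ij},z_{(i+1)j})$ in the $j$-th finite sequence, so that Lemma~\ref{l3} applies verbatim.
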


\begin{proof}
Set $j:=p_{i+1}$. We have that $m_j = m_{p_{i+1}} = i+1$, so $i \in [1,m_j-1]$. Note that $z_{(i+1)j}=z_{(i+1)p_{i+1}}=x_{i+1}$ and that, since $j \geq p_i$, $z_{ij} = z_{ip_i}=x_i$. Now apply Lemma~\ref{l3}.
\end{proof}

\begin{lemma}\label{l5}
We have that $\lim_{n \to \infty} d(x_n,x_{n+1}) = 0$.
\end{lemma}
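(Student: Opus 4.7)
The plan is to derive $\lim_n d(x_n,x_{n+1})=0$ from Lemma~\ref{l2} by a standard telescoping/contradiction argument, using that $Fix(S)=Fix(T)$ is nonempty (by the Kohlenbach--Leu\c stean theorem). Fix any $p \in Fix(S)$. The first step is to observe that the sequence $(d(x_k,p))_{k\geq 1}$ is nonincreasing: indeed, applying Lemma~\ref{l9} with $n:=p_{k+1}\geq p_k$ gives $d(x_{k+1},p) = d(y_{p_{k+1}},p) \leq d(x_k,p)$. Since it is also bounded below by $0$, it converges in $\R$, and in particular $d(x_k,p)-d(x_{k+1},p) \to 0$ as $k\to\infty$.

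Next, suppose for contradiction that $d(x_n,x_{n+1}) \not\to 0$. Then there exist $\eps>0$ and a subsequence $(x_{n_k})$ with $d(x_{n_k},x_{n_k+1}) \geq \eps$ for all $k$. Applying Lemma~\ref{l2} to each such index yields
\[
d(x_{n_k},p) - d(x_{n_k+1},p) \geq u\!\left(b,\tfrac{\eps}{b}\right) b,
\]
where by Proposition~\ref{u-prop} the constant $u(b,\eps/b)b = \tfrac{\eps}{2}\,\eta(b,\eps/b)\,b > 0$ is strictly positive (since $\eta$ takes values in $(0,1]$). This contradicts the fact from the previous paragraph that consecutive differences of the convergent sequence $(d(x_k,p))$ tend to $0$.

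The argument is therefore quite short and there is no real obstacle; the main point is just checking that the indices match up correctly in the application of Lemma~\ref{l9} (so that monotonicity of $(d(x_k,p))$ is inherited from the already-proved monotonicity along each row via Lemma~\ref{l4}) and that Lemma~\ref{l2} applies uniformly with the same $\eps$ for every term of the extracted subsequence.
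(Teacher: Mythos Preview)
Your argument is correct and follows essentially the same route as the paper: fix $p\in Fix(S)$, use monotonicity of $(d(x_k,p))$ together with Lemma~\ref{l2} to derive a contradiction from a hypothetical $\eps$-gap along a subsequence. The only cosmetic difference is that the paper packages the contradiction via a telescoping sum (obtaining $b \geq k\cdot u(b,\eps/b)b$ for arbitrarily large $k$), whereas you phrase it as ``consecutive differences of a convergent sequence must tend to $0$''; these are the same idea. Your appeal to Lemma~\ref{l9} for monotonicity is fine (the paper uses the same fact implicitly in bounding $\sum c_n \geq \sum c_{d_i}$, which needs $c_n\geq 0$).
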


\begin{proof}
Here is where we use that $Fix(S) \neq \emptyset$. Let $p \in Fix(S)$. Assume that our conclusion is false, i.e. there is an $\eps > 0$ such that for all $N$ there is an $n >N$ such that $d(x_n,x_{n+1}) > \eps$. Denote, for all $n$, $a_n:=d(x_n,x_{n+1})$ and $c_n:=d(x_n,p) - d(x_{n+1},p)$. Put $d_0:=1$ and for all $n\geq 0$, put $d_{n+1}>d_n$ such that $a_{d_{n+1}} > \eps$. In particular, we have that for all $i \geq 1$, $a_{d_i}>\eps$. Let $u$ be such that the property described by Proposition~\ref{u-prop} holds and set $k:=\left\lceil\frac{b+1}{u\left(b,\frac\eps b\right)b}\right\rceil$. Applying Lemma~\ref{l2}, we get that for all $i \geq 1$, $c_{d_i}\geq u\left(b,\frac\eps b\right)b$. We may thus write
$$b \geq d(x_1,p) \geq d(x_1,p) - d(x_{d_k+1},p) = \sum_{n=1}^{d_k} c_n \geq \sum_{i=1}^k c_{d_i} \geq k \cdot u\left(b,\frac\eps b\right)b \geq b+1,$$
a contradiction.
\end{proof}

\begin{lemma}\label{la}
Let $n$ be such that $x_n=x_{n+1}=x_{n+2}$. Then:
\begin{enumerate}[(i)]
\item $x_n=Sx_n$;
\item for all $j \geq p_{n+2}$ and all $q \in [n,m_j]$, $z_{qj}=x_n$;
\item for all $j \geq p_{n+2}$, $y_j=x_n$;
\item for all $q \geq n$, $x_q=x_n$.
\end{enumerate}
\end{lemma}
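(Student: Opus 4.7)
The plan is to prove the four claims in the stated order: (i) is an immediate consequence of the construction's built-in inequality, (ii) is the workhorse and proceeds by induction on $j$, while (iii) and (iv) follow as direct corollaries.

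For (i), the construction of the sequence $(p_k)$ established that $2d(x_k,x_{k+1}) \geq d(x_k,Sx_k)$ for every $k \geq 1$. Setting $k=n$ and using $x_n=x_{n+1}$ collapses the left side to zero, forcing $Sx_n=x_n$.

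For (ii), I would induct on $j \geq p_{n+2}$. The base case $j=p_{n+2}$ has $m_j=n+2$, so the range $[n,m_j]=\{n,n+1,n+2\}$; since $(p_k)$ is strictly increasing, $p_{n+2}\geq p_{n+1}\geq p_n$, and the defining property of the $p_k$'s yields $z_{n,j}=x_n$, $z_{n+1,j}=x_{n+1}$, $z_{n+2,j}=x_{n+2}$, all equal to $x_n$ by hypothesis. For the inductive step, the crucial observation is that $j+1>p_{n+2}$ forces $m_{j+1}\geq n+3$. If line $j+1$ is produced by Construction case II, the copied entries $z_{k(j+1)}=z_{kj}$ on $[n,m_j]$ are handled by the inductive hypothesis, and the only new entry $\frac{z_{m_jj}+Sz_{m_jj}}{2}$ collapses via (i) to $\frac{x_n+x_n}{2}=x_n$. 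If instead it is produced by Construction case I with minimal index $i$, the bound $m_{j+1}=i+1\geq n+3$ gives $i\geq n+2$, so $[n,i]\subseteq[n,m_j]$ is covered by the hypothesis, and the new entry $\frac{z_{ij}+Sz_{m_jj}}{2}$ once more reduces to $x_n$.

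Finally, (iii) drops out of (ii) by taking $q=m_j \in [n,m_j]$. For (iv), the cases $q \in \{n,n+1,n+2\}$ are the hypothesis, and for $q\geq n+2$ monotonicity of $(p_q)$ gives $p_q\geq p_{n+2}$, so $x_q=y_{p_q}=x_n$ by (iii). The only delicate point in the whole argument is verifying $m_{j+1}\geq n+3$ in the inductive step of (ii); this bound is what ensures Construction case I never truncates the sequence below index $n$, and it is precisely the reason the hypothesis demands three consecutive equal $x_k$'s rather than merely two.
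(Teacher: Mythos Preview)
Your argument is correct, and parts (ii)--(iv) follow essentially the same inductive route as the paper. The one genuine difference is in (i): you invoke the inequality $2d(x_k,x_{k+1}) \geq d(x_k,Sx_k)$ that was established during the construction of the $p_k$'s, which immediately gives $Sx_n=x_n$ from $x_n=x_{n+1}$ alone. The paper instead argues directly at $j=p_{n+2}$: since $z_{n(j-1)}=x_n=x_{n+1}=z_{(n+1)(j-1)}$, Construction case I with $i=n+1$ would require $d(z_{(n+1)(j-1)},z_{(n+2)(j-1)})<0$, so line $j$ must come from Construction case II, whence $x_{n+2}=\frac{x_{n+1}+Sx_{n+1}}{2}$ and $x_{n+1}=x_{n+2}$ forces $Sx_{n+1}=x_{n+1}$. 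Your shortcut is cleaner and uses only two of the three hypothesised equalities for (i); the paper's version is more self-contained in that it does not lean on that side inequality (which the paper later flags as optional machinery when giving a second proof of Lemma~\ref{l7t}). Either route is fine here, and the remaining parts coincide with the paper's proof.
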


\begin{proof}
\begin{enumerate}[(i)]
\item Put $j:=p_{n+2}$. Then $j > p_{n+1}$, so $j-1 \geq p_{n+1} > p_n$. Then we get that $z_{(n+1)(j-1)}=z_{(n+1)p_{n+1}}=x_{n+1}$ and $z_{n(j-1)}=z_{np_{n}}=x_n$, so, since $x_n=x_{n+1}$, $d(z_{(n+1)(j-1)},z_{n(j-1)})=0$.

Assume that the $j$th sequence was obtained using Construction case I. Then we must have, since $m_j=n+2$, $d(z_{(n+1)(j-1)},z_{(n+2)(j-1)})< d(z_{(n+1)(j-1)},z_{n(j-1)})=0$, a contradiction. Thus, it was obtained using Construction case II, so
$$x_{n+2} = z_{(n+2)j} = \frac{z_{(n+1)(j-1)}+Sz_{(n+1)(j-1)}}2 = \frac{x_{n+1}+Sx_{n+1}}2.$$
However, $x_{n+1}=x_{n+2}$, so $0=d(x_{n+1},x_{n+2})=d(x_{n+1},Sx_{n+1})/2$. Thus, $x_{n+1}=Sx_{n+1}$, i.e. $x_n=Sx_n$.

\item It is clear that this holds for $j=p_{n+2}$. Assume that it holds for a $j \geq p_{n+2}$ -- since then $j \geq p_{n+1} +1$, $m_j \geq n+2$ -- and we want to prove it for $j+1$ -- since $j+1\geq p_{n+2}+1$, $m_{j+1}\geq p_{n+3}$. Thus, the $(j+1)$th sequence was obtained either using Construction case I with $i \geq n+2$ or using Construction case II. Let $q \in [n,m_j]$. Then either $z_{q(j+1)}=z_{qj}=x_n$ (by the induction hypothesis) or
$$z_{qj} = \frac{z_{(q-1)j}+Sz_{m_jj}}2,$$
but this can happen only if $q-1 \geq n+2$, so (by the induction hypothesis) $z_{(q-1)j}=x_n$ and $z_{m_jj}=x_n$. Since in addition we know that $x_n=Sx_n$, we have that
$$z_{qj} = \frac{x_n+Sx_n}2=x_n.$$

\item Let $j \geq p_{n+2}$ and put $q:=m_j$ in the above.

\item Let $q \geq n+3$. Then $p_q \geq p_{n+2}$ and $x_q=y_{p_q}=x_n$.
\end{enumerate}
\end{proof}

\begin{lemma}\label{l6t}
Let $n \geq 2$ be such that $d(x_n,x_{n+1}) < d(x_n,x_{n-1})$. Then for all $u \geq p_{n+1}$, $2d(x_n, x_{n-1}) \geq d(x_n, Sy_v)$.
\end{lemma}

\begin{proof}
Let $v \geq p_{n+1}$, so for all $i \in [n-1,n+1]$, $z_{iv}=x_i$. Assume that $2d(x_n, x_{n-1}) < d(x_n, Sy_v)$, so
$$d(z_{nv},z_{(n-1)v}) \leq \frac12 d(z_{nv}, Sz_{m_vv}) = d\left(z_{nv},\frac{z_{nv}+Sz_{m_vv}}2\right).$$
We also know that $d(z_{nv},z_{(n+1)v})<d(z_{nv},z_{(n-1)v})$, so the $(v+1)$th sequence is obtained by Construction case I with $i \leq n$, so $m_{v+1} \leq n+1$. On the other hand $v+1 \geq p_{n+1}+1$, so $m_{v+1} \geq n+2$, a contradiction.
\end{proof}

\begin{lemma}\label{l6}
Let $n \geq 2$ be such that $d(x_n,x_{n+1}) < d(x_n,x_{n-1})$. Then for all $q \geq n$, $2d(x_n, x_{n-1}) \geq d(x_n, x_q)$.
\end{lemma}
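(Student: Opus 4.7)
The plan is to prove by induction on $J \ge p_{n+1}$ the strengthened statement that for every $i$ with $n \le i \le m_J$,
$$d(x_n, z_{iJ}) \le 2 d(x_n, x_{n-1}).$$
Once this is established, the lemma follows immediately: for $q = n$ the inequality is trivial, while for $q \ge n+1$ one has $p_q \ge p_{n+1}$, $m_{p_q} = q$ and $z_{q p_q} = y_{p_q} = x_q$, so applying the strengthened claim with $J := p_q$ and $i := q$ gives exactly $d(x_n, x_q) \le 2 d(x_n, x_{n-1})$.

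For the base case $J = p_{n+1}$ one has $m_J = n+1$, so only $i = n$ and $i = n+1$ need be considered. Since $p_{n+1} \ge p_n$, $z_{nJ} = x_n$ contributes distance $0$, while $z_{(n+1)J} = y_{p_{n+1}} = x_{n+1}$ together with the hypothesis on $n$ gives $d(x_n, x_{n+1}) < d(x_n, x_{n-1}) \le 2 d(x_n, x_{n-1})$. For the inductive step, assume the claim for $J$ and pass to $J+1$. Since $J+1 \ge p_{n+1}+1$, the already recorded property that $m_j \ge k+1$ for all $j \ge p_k+1$ (applied with $k = n+1$) yields $m_{J+1} \ge n+2$. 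For $n \le i < m_{J+1}$, both Construction cases I and II leave $z_{i(J+1)} = z_{iJ}$ with $i \le m_J$, so the induction hypothesis carries over verbatim. For the single new index $i = m_{J+1}$, the freshly created point has the form
$$z_{i(J+1)} = \frac{v + Sy_J}{2},$$
with $v = z_{(i-1)J}$ in Construction case I and $v = y_J = z_{m_J J}$ in Construction case II. In either subcase, the position of $v$ is at least $n+1$ (because $m_{J+1} \ge n+2$ forces $i-1 \ge n+1$ in case I, and in case II it equals $m_J \ge n+1$), so the induction hypothesis gives $d(x_n, v) \le 2 d(x_n, x_{n-1})$. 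Combining axiom (W1) with Lemma~\ref{l6t} applied to $u := J \ge p_{n+1}$ (which yields $d(x_n, Sy_J) \le 2 d(x_n, x_{n-1})$) then gives
$$d(x_n, z_{i(J+1)}) \le \tfrac{1}{2} d(x_n, v) + \tfrac{1}{2} d(x_n, Sy_J) \le 2 d(x_n, x_{n-1}),$$
closing the induction.

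The main subtlety, and the point I expect to be the real obstacle to notice, is that the lemma's stated conclusion $d(x_n, x_q) \le 2 d(x_n, x_{n-1})$ cannot be proved directly by induction on $q$ alone: when $y_{J+1}$ is produced by Construction case I, the resulting midpoint involves a generic $z_{iJ}$ rather than $y_J$ itself, and this $z_{iJ}$ need not be one of the $x_k$'s. One is therefore forced to strengthen the induction hypothesis to a uniform bound over the entire tail $z_{iJ}$ for $n \le i \le m_J$ of every sequence $J \ge p_{n+1}$; once that strengthening is in place, everything collapses into a single application of axiom (W1) together with Lemma~\ref{l6t}.
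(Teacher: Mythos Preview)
Your proof is correct, but it takes a different and more elaborate route than the paper's, and your closing paragraph misidentifies the obstacle. The paper proves the lemma by a direct induction on $q$: setting $u := p_{q+1}-1 \geq p_q$, one has $z_{qu} = x_q$ (since $u \geq p_q$) and hence
\[
x_{q+1} = z_{(q+1)p_{q+1}} = \frac{z_{qu} + Sy_u}{2} = \frac{x_q + Sy_u}{2},
\]
so that $d(x_n,x_{q+1}) \leq \tfrac12 d(x_n,x_q) + \tfrac12 d(x_n,Sy_u)$, and one closes with the induction hypothesis and Lemma~\ref{l6t}. The point you missed is that regardless of whether the $p_{q+1}$th line arises via Construction case I or II, the relevant index is always $m_{p_{q+1}}-1 = q$, and $z_{qu}$ is already stabilized to $x_q$; so the intermediate $z_{iJ}$'s never actually enter. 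Your strengthened induction over all tails $z_{iJ}$ with $n \leq i \leq m_J$ is valid and in fact subsumes both this lemma and Lemma~\ref{ly1} (the analogue for $y_u$) in one stroke, which is a mild conceptual gain; but for the present statement it is more machinery than necessary, and your assertion that the direct induction ``cannot be proved'' is simply mistaken.
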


\begin{proof}
Clearly, the conclusion holds for $q=n$ and $q=n+1$. Let $q \geq n+1$ and assume that the conclusion holds for $q$. We want to prove that it also holds for $q+1$. Put $v:=p_{q+1}-1$. Since $v \geq p_q$, $z_{qv}=x_q$, so
$$x_{q+1}=\frac{z_{qv}+Sy_v}2 = \frac{x_q+Sy_v}2 .$$
By the induction hypothesis, we have that $2d(x_n, x_{n-1}) \geq d(x_n, x_q)$ and since $v \geq p_q \geq p_{n+1}$, by Lemma~\ref{l6t} we have that $2d(x_n, x_{n-1}) \geq d(x_n, Sy_v)$, so
$$d(x_n,x_{q+1}) \leq \frac12d(x_n, x_q) + \frac12d(x_n, Sy_v) \leq 2d(x_n, x_{n-1}).$$
\end{proof}

\begin{proposition}\label{l7}
The sequence $(x_n)$ is Cauchy.
\end{proposition}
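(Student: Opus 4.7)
The strategy is to combine Lemma~\ref{l5} (vanishing successive distances) with Lemma~\ref{l6} (which says that whenever $d(x_n, x_{n+1}) < d(x_n, x_{n-1})$, the term $x_n$ lies within $2d(x_n, x_{n-1})$ of every later term). The point is that to produce a Cauchy tail at scale $\varepsilon$, it suffices to locate a single index $n$ at which both the strict-decrease hypothesis of Lemma~\ref{l6} holds \emph{and} the controlling quantity $d(x_n, x_{n-1})$ is already small, say at most $\varepsilon/4$.

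Concretely, given $\varepsilon > 0$, I would first apply Lemma~\ref{l5} to pick $N_0$ with $d(x_n, x_{n+1}) < \varepsilon/4$ for all $n \geq N_0$, and then split on a simple dichotomy. In the main case (A), there exists some $n \geq N_0 + 1$ with $d(x_n, x_{n+1}) < d(x_{n-1}, x_n)$. Since $n-1 \geq N_0$, we have $2d(x_n, x_{n-1}) < \varepsilon/2$, so Lemma~\ref{l6} at this $n$ yields $d(x_n, x_q) < \varepsilon/2$ for every $q \geq n$, and then the triangle inequality gives $d(x_p, x_q) < \varepsilon$ for all $p, q \geq n$, as required. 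In the alternative case (B) no such $n$ exists, i.e.\ $d(x_n, x_{n+1}) \geq d(x_{n-1}, x_n)$ for every $n \geq N_0 + 1$; the sequence $(d(x_n, x_{n+1}))_{n \geq N_0}$ is then non-decreasing but, by Lemma~\ref{l5}, converges to $0$, which forces it to vanish identically from index $N_0$ onward. So $(x_n)$ is constant past $N_0$ and hence trivially Cauchy.

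I do not foresee a serious obstacle: the bulk of the analytic work has been front-loaded into Lemmas~\ref{l5} and~\ref{l6}, and the argument above reduces to a clean case split. The only point that deserves a moment of care is the strictness of the inequality in the hypothesis of Lemma~\ref{l6}: it is precisely this strictness that prevents case (A) from covering everything and requires the separate handling of case (B) by the monotonicity-to-zero observation. Note that this approach bypasses Lemma~\ref{la}, though the latter could equally be invoked to dispatch case (B), since in that case three consecutive terms of $(x_n)$ eventually agree.
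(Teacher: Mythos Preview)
Your proof is correct and takes essentially the same approach as the paper: reduce to Lemmas~\ref{l5} and~\ref{l6}, with a case split to handle the degenerate situation where no strict decrease occurs. The paper's split is organized slightly differently (it tests whether $x_N=x_{N+1}=x_{N+2}$ and invokes Lemma~\ref{la} in that case, and in the other case explicitly locates the strict-decrease index by waiting for the successive distance to drop below half its starting value), but the substance is identical, and you already note that Lemma~\ref{la} could replace your monotonicity-to-zero argument in Case~(B).
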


\begin{proof}
Let $\eps>0$. We want an $M$ such that for all $n$, $m\geq M$, $d(x_n,x_m) \leq \eps$. Put $\delta:=\frac\eps4$. By Lemma~\ref{l5}, there is an $N$ such that for all $k \geq N$, $d(x_k,x_{k+1}) \leq \delta$.\\[2mm]

{\bf Case I.} We have that $x_N=x_{N+1}=x_{N+2}$.\\[1mm]

Then, by Lemma~\ref{la}, for all $q \geq N$, $x_q=x_N$, so we may take $M:=N$.\\[2mm]

{\bf Case II.} There is a $k \in \{N,N+1\}$ such that $x_k \neq x_{k+1}$.\\[1mm]

Put $\rho:=d(x_k,x_{k+1})>0$. Again, by Lemma~\ref{l5}, there is a $p \geq k+1$ such that $d(x_p,x_{p+1}) \leq \frac\rho2$, so there is an $M \in [k+1,p]$ such that $d(x_M, x_{M+1})<d(x_M,x_{M-1})$. Thus, by Lemma~\ref{l6}, for all $q \geq M$, $2d(x_M,x_{M-1}) \geq d(x_M,x_q)$.

On the other hand, since $M \geq k+1$, $M-1\geq k\geq N$, so $d(x_M,x_{M-1}) \leq \delta$ and for all $q \geq M$, $d(x_M,x_q)\leq 2\delta$.

Let $n$, $m \geq M$. Then $d(x_n,x_m)\leq d(x_M,x_n) + d(x_M,x_m) \leq 4\delta =\eps$.
\end{proof}

Since $X$ is complete, $(x_n)$ is convergent. We denote its limit by $p$.

\begin{lemma}\label{l6b}
Let $n \geq 2$ be such that $d(x_n,x_{n+1}) < d(x_n,x_{n-1})$. Then for all $q \geq n+1$, $2d(x_n, x_{n-1}) \geq d(x_n, Sx_q)$.
\end{lemma}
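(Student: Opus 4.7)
The plan is to reduce this statement directly to Lemma~\ref{l6t} by a one-line identification. For a given $q \geq n+1$, I would set $u := p_q$. By the construction of the sequence $(p_k)$, we have $y_{p_q} = x_q$, so $Sy_u = Sx_q$. It remains only to verify the hypothesis $u \geq p_{n+1}$ of Lemma~\ref{l6t}, which follows from $q \geq n+1$ combined with the fact that $(p_k)$ is strictly increasing (established when constructing the $p_k$'s). Applying Lemma~\ref{l6t} then yields
\[
2d(x_n, x_{n-1}) \geq d(x_n, Sy_u) = d(x_n, Sx_q),
\]
which is exactly the desired inequality.

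There is no real obstacle: the present lemma differs from Lemma~\ref{l6t} only in that the $q$th term $x_q$ of the subsequence is used in place of the full sequence term $y_u$, and the relation $x_q = y_{p_q}$ together with the monotonicity of $(p_k)$ makes the two statements essentially interchangeable. The proof should therefore be a couple of lines, and it plays the same role for Lemma~\ref{l6t} as Lemma~\ref{l6} does for itself, but in a form directly applicable to the limit argument that will come next (which presumably needs $Sx_q$, not $Sy_u$, in order to invoke the continuity of $S$ at the limit $p$ of $(x_n)$).
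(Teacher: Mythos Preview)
Your proposal is correct and is essentially identical to the paper's own proof: the paper also sets $u:=p_q$, notes $u \geq p_{n+1}$ (since $q \geq n+1$ and the $p_k$'s are increasing) and $x_q=y_u$, and then invokes Lemma~\ref{l6t}. Your surrounding commentary about the role of the lemma is also accurate.
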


\begin{proof}
Let $q \geq n+1$ and put $v:=p_q \geq p_{n+1}$. Then $x_q=y_v$ and the conclusion follows by Lemma~\ref{l6t}.
\end{proof}

\begin{lemma}\label{l7t}
We have that $\lim_{n \to \infty} d(x_n,Sx_n) = 0$.
\end{lemma}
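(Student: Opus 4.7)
The approach is a direct application of the inequality $2\, d(x_n, x_{n+1}) \ge d(x_n, Sx_n)$, which was established explicitly at the end of the construction of the sequence $(p_k)_k$ (immediately before the statement of Lemma~\ref{l9}), combined with Lemma~\ref{l5}.

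Recall that this inequality was shown there by a case split on how $p_{k+1}$ was obtained from $p_k$. In the case $p_{k+1} = p_k + 1$ (the simple case where Construction case II continues to apply), we have $x_{k+1} = \frac{x_k + Sx_k}{2}$ directly from the construction, so $d(x_k, x_{k+1}) = \frac{1}{2} d(x_k, Sx_k)$ with equality. In the case $p_{k+1} = t > p_k + 1$, the identity $z_{(k+1)(t-1)} = \frac{x_k + Sx_k}{2}$ combined with \eqref{estar} gives $d(x_k, \frac{x_k + Sx_k}{2}) < d(x_k, x_{k-1})$, while \eqref{e4} gives $d(x_k, x_{k+1}) \ge d(x_k, x_{k-1})$; chaining these yields the strict inequality $d(x_k, x_{k+1}) > \frac{1}{2} d(x_k, Sx_k)$. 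Either way, $d(x_k, Sx_k) \le 2\, d(x_k, x_{k+1})$ for every $k \ge 1$.

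With this inequality in hand, the lemma is a two-line argument: since
\[ 0 \le d(x_n, Sx_n) \le 2\, d(x_n, x_{n+1}) \]
for every $n \ge 1$ and $\lim_{n \to \infty} d(x_n, x_{n+1}) = 0$ by Lemma~\ref{l5}, the squeeze principle gives $\lim_{n \to \infty} d(x_n, Sx_n) = 0$.

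There is essentially no obstacle to overcome here — the substance of the argument was already done in Lemma~\ref{l5} (whose proof used a fixed point $p \in Fix(S)$ together with the decay estimate in Lemma~\ref{l2}) and in the case analysis performed during the construction of $(p_k)_k$. The more elaborate Lemmas~\ref{l6t} and~\ref{l6b}, which bound $d(x_n, Sx_q)$ uniformly in $q \ge n+1$, are not needed for this step but rather for subsequent arguments identifying the limit of $(x_n)$ as a fixed point of $S$.
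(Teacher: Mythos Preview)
Your proof is correct and matches the paper's (first) proof essentially verbatim: both invoke the inequality $2\,d(x_k,x_{k+1}) \geq d(x_k,Sx_k)$, established during the construction of the $p_k$'s, together with Lemma~\ref{l5}. Your recap of the case analysis behind that inequality is also accurate; the only minor inaccuracy is in the closing commentary --- Lemma~\ref{l6b} is used in the paper's \emph{second}, alternative proof of this very lemma rather than in a later step --- but this does not affect the argument.
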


\begin{proof}
It follows immediately from Lemma~\ref{l5} and the fact that for all $k$, $2d(x_k,x_{k+1}) \geq d(x_k,Sx_k)$.
\end{proof}

Thus, by the establishing properties of $S$, $p \in Fix(S) = Fix(T)$.

\begin{proposition}\label{l10}
The sequence $(y_n)$ converges to $p$.
\end{proposition}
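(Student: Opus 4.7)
The plan is to use Lemma~\ref{l9} to transfer convergence from the subsequence $(x_k)$ to the full sequence $(y_n)$. Recall that by Lemma~\ref{l7t} combined with the established properties of $S$, we already know that $p \in \mathrm{Fix}(S)$, so Lemma~\ref{l9} is applicable with this particular $p$.

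Concretely, I would fix $\eps > 0$. Since $(x_k)$ converges to $p$, there is some $K$ with $d(x_K, p) < \eps$. By Lemma~\ref{l9} applied with our $p \in \mathrm{Fix}(S)$ and this $K$, for every $n \geq p_K$ we have
\[
d(y_n, p) \leq d(x_K, p) < \eps.
\]
Taking $M := p_K$ gives the desired convergence $y_n \to p$.

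There is no real obstacle here: the work has already been done in Lemma~\ref{l9}, which packages the monotonicity of $d(z_{ij}, p)$ along a row (Lemma~\ref{l4}) with the observation that $y_n = z_{m_n n}$ and that $z_{kn}$ stabilizes to $x_k$ once $n \geq p_k$. The only subtlety worth noting is that we use $p \in \mathrm{Fix}(S)$, which has been secured prior to this proposition, rather than trying to invoke Lemma~\ref{l9} with an arbitrary fixed point.
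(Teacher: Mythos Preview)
Your proof is correct and follows essentially the same approach as the paper: pick $k$ with $d(x_k,p)\leq\eps$ using convergence of $(x_n)$ to $p$, set the threshold to $p_k$, and invoke Lemma~\ref{l9}. The paper's version is slightly terser but the argument is identical, including the implicit use of $p\in Fix(S)$ to make Lemma~\ref{l9} applicable.
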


\begin{proof}
Let $\eps>0$. We want an $N$ such that for all $n \geq N$, $d(y_n,p) \leq \eps$. Let $k$ be such that $d(x_k,p) \leq \eps$ and put $N:=p_k$. Then the conclusion follows by Lemma~\ref{l9}.
\end{proof}

We may also show that $(y_n)$ is Cauchy without referring to its limit, by first proving the following analogue of Lemma~\ref{l6}.

\begin{lemma}\label{ly1}
Let $n \geq 2$ be such that $d(x_n,x_{n+1}) < d(x_n,x_{n-1})$. Then for all $v \geq p_{n+1}$, $2d(x_n, x_{n-1}) \geq d(x_n, y_v)$.
\end{lemma}

\begin{proof}
If $v=p_{n+1}$, $y_v=x_{n+1}$, so the conclusion holds by our hypothesis. Take an $v \geq p_{n+1}$ and assume the conclusion holds for all $l \in [p_{n+1},v]$. We want to prove it for $v+1$.  Since $v+1 \geq p_{n+1} +1$, $m_{v+1} \geq n+2$. Set $i:=m_{v+1}-1 \geq n+1$. Then
$$y_{v+1} = \frac{z_{iv} + Sy_v}2.$$
Set
$$s:=\min\{ k \geq 0 \mid m_{v-k} \leq i\}.$$
Since $m_{p_{n+1}} = n+1 \leq i$, we have that $s \leq v - p_{n+1}$. We also have that $m_{v-s} \leq i$ and $m_{v-s+1} >i$ (noting, for the edge case $s=0$, that $m_{v+1} > i$). Since $m_{v-s} \geq m_{v-s+1} -1 > i-1$, we have that $m_{v-s}=i$. By the minimality of $s$, for all $k \in [0,s)$, $m_{v-k}>i$, i.e. for all $k \in (0,s]$, $m_{v-s+k} > i$, so for all $k \in [0,s]$, $z_{i(v-s+k)} = z_{i(v-s)} = z_{m_{v-s}(v-s)} = y_{v-s}$. In particular, for $k:=s$, $z_{iv}=y_{v-s}$. Put $l:=v-s$. Then, since $s \in [0,v - p_{n+1}]$, $l \in [p_{n+1},v]$. By the induction hypothesis, we have that
$$2d(x_n, x_{n-1}) \geq d(x_n, y_l) = d(x_n, z_{iv}).$$
In addition, by Lemma~\ref{l6t}, we have that
$$2d(x_n, x_{n-1}) \geq d(x_n, Sy_v),$$
so
$$d(x_n,y_{v+1}) \leq \frac12d(x_n, z_{iv}) + \frac12d(x_n, Sy_v) \leq 2d(x_n, x_{n-1}).$$
\end{proof}

\begin{proposition}\label{ly2}
The sequence $(y_n)$ is Cauchy.
\end{proposition}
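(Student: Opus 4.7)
The plan is to mimic almost verbatim the proof of Proposition~\ref{l7}, with Lemma~\ref{ly1} playing the role that Lemma~\ref{l6} plays there, and with Lemma~\ref{la} handling the degenerate case in which the construction stalls.

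Fix $\eps > 0$; I would set $\delta := \eps/4$ and invoke Lemma~\ref{l5} to obtain an $N$ such that $d(x_k,x_{k+1}) \leq \delta$ for every $k \geq N$. Then split into two cases. In the first, $x_N = x_{N+1} = x_{N+2}$; here Lemma~\ref{la}(iii) directly yields $y_j = x_N$ for all $j \geq p_{N+2}$, so one may take $M := p_{N+2}$ and trivially obtain $d(y_n,y_m) = 0 \leq \eps$ for $n,m \geq M$. In the second, there is some $k \in \{N,N+1\}$ with $x_k \neq x_{k+1}$; put $\rho := d(x_k,x_{k+1}) > 0$ and apply Lemma~\ref{l5} again to obtain $p \geq k+1$ with $d(x_p,x_{p+1}) \leq \rho/2$. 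Since $d(x_{k+1},x_{k+2}) \leq \delta$ need not dominate $\rho$, but in any case the sequence $(d(x_n,x_{n+1}))_{n \geq k}$ starts at $\rho$ and drops strictly below $\rho$ by index $p$, there must exist some $s \in [k+1,p]$ with $d(x_s,x_{s+1}) < d(x_s,x_{s-1})$.

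Now Lemma~\ref{ly1} applies at index $s$, giving $2d(x_s,x_{s-1}) \geq d(x_s,y_u)$ for every $u \geq p_{s+1}$. Since $s-1 \geq k \geq N$, we have $d(x_s,x_{s-1}) \leq \delta$, and hence $d(x_s,y_u) \leq 2\delta$ for all $u \geq p_{s+1}$. Taking $M := p_{s+1}$ and using the triangle inequality, $d(y_n,y_m) \leq d(x_s,y_n) + d(x_s,y_m) \leq 4\delta = \eps$ for $n,m \geq M$, which is what we wanted.

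The main obstacle has already been surmounted in Lemma~\ref{ly1}; once that is in hand, the argument is a routine transcription of the proof of Proposition~\ref{l7}, the only subtlety being the need to also invoke Lemma~\ref{la} to dispose of the case where the $x_n$ sequence becomes constant early (and hence no index $s$ with a strict drop after $N$ is produced by the Lemma~\ref{l5} dichotomy alone).
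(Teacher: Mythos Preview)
Your proposal is correct and follows essentially the same approach as the paper's own proof: the same choice $\delta=\eps/4$, the same two-case split via Lemma~\ref{la} and Lemma~\ref{l5}, the same production of the index $s$ with $d(x_s,x_{s+1})<d(x_s,x_{s-1})$, and the same application of Lemma~\ref{ly1} with $M:=p_{s+1}$. The only difference is cosmetic: you spell out slightly more carefully why such an $s$ exists.
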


\begin{proof}
Let $\eps>0$. We want an $M$ such that for all $n$, $m\geq M$, $d(y_n,y_m) \leq \eps$. Put $\delta:=\frac\eps4$. By Lemma~\ref{l5}, there is an $N$ such that for all $k \geq N$, $d(x_k,x_{k+1}) \leq \delta$.\\[2mm]

{\bf Case I.} We have that $x_N=x_{N+1}=x_{N+2}$.\\[1mm]

Then, by Lemma~\ref{la}, for all $j \geq p_{N+2}$, $y_j=x_N$, so we may take $M:=p_{N+2}$.\\[2mm]

{\bf Case II.} There is a $k \in \{N,N+1\}$ such that $x_k \neq x_{k+1}$.\\[1mm]

Put $\rho:=d(x_k,x_{k+1})>0$. Again, by Lemma~\ref{l5}, there is a $p \geq k+1$ such that $d(x_p,x_{p+1}) \leq \frac\rho2$, so there is an $s \in [k+1,p]$ such that $d(x_s, x_{s+1})<d(x_s,x_{s-1})$. Thus, by Lemma~\ref{ly1}, for all $v \geq p_{s+1}$, $2d(x_s,x_{s-1}) \geq d(x_s,y_v)$.

On the other hand, since $s \geq k+1$, $s-1\geq k\geq N$, so $d(x_s,x_{s-1}) \leq \delta$ and for all $v \geq p_{s+1}$, $d(x_s,y_v)\leq 2\delta$.

Put $M:=p_{s+1}$. Let $n$, $m \geq M$. Then $d(y_n,y_m)\leq d(x_s,y_n) + d(x_s,y_m) \leq 4\delta =\eps$.
\end{proof}

Since $X$ is complete, $(y_n)$ is convergent and we again denote its limit by $p$. Since $(x_n)$ is a subsequence of $(y_n)$, $(x_n)$ also converges to $p$, and so, as before, $p \in Fix(S) = Fix(T)$. If we do not want to use this detour via the convergence of $(x_n)$, we must prove the following analogue of Lemma~\ref{l7t}.

\begin{lemma}\label{ly3}
We have that $\lim_{n \to \infty} d(y_n,Sy_n) = 0$.
\end{lemma}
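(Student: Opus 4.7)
The plan is to mirror the argument of the second proof of Lemma~\ref{l7t}, but with $y_n$ replacing $x_n$ throughout, using Lemma~\ref{ly1} to bound $d(x_s, y_u)$ and Lemma~\ref{l6t} to bound $d(x_s, Sy_u)$ simultaneously. Given $\eps>0$, I set $\delta := \eps/4$ and use Lemma~\ref{l5} to pick $N$ such that $d(x_k,x_{k+1}) \leq \delta$ for all $k \geq N$. The goal is to find a single index $M$ after which both $d(x_s, y_n)$ and $d(x_s, Sy_n)$ are bounded by $2\delta$, from which $d(y_n, Sy_n) \leq 4\delta = \eps$ follows by the triangle inequality.

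For Case I, where $x_N = x_{N+1} = x_{N+2}$, I invoke Lemma~\ref{la}: parts (i) and (iii) give $Sx_N = x_N$ and $y_j = x_N$ for all $j \geq p_{N+2}$, so $Sy_j = Sx_N = x_N = y_j$ and hence $d(y_j, Sy_j) = 0$ outright. I take $M := p_{N+2}$.

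For Case II, where some $k \in \{N, N+1\}$ satisfies $x_k \neq x_{k+1}$, I follow the same descent-finding argument: set $\rho := d(x_k, x_{k+1}) > 0$, use Lemma~\ref{l5} to find $p \geq k+1$ with $d(x_p, x_{p+1}) \leq \rho/2$, and extract $s \in [k+1, p]$ with $d(x_s, x_{s+1}) < d(x_s, x_{s-1})$. Since $s-1 \geq k \geq N$, we have $d(x_s, x_{s-1}) \leq \delta$. Then Lemma~\ref{ly1} gives $d(x_s, y_u) \leq 2d(x_s, x_{s-1}) \leq 2\delta$ for all $u \geq p_{s+1}$, and Lemma~\ref{l6t} gives $d(x_s, Sy_u) \leq 2d(x_s, x_{s-1}) \leq 2\delta$ for the same range of $u$. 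Setting $M := p_{s+1}$, for $n \geq M$ we conclude
\[
d(y_n, Sy_n) \leq d(y_n, x_s) + d(x_s, Sy_n) \leq 2\delta + 2\delta = \eps.
\]

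There is no real obstacle here: all the heavy lifting has already been done in Lemmas~\ref{l6t} and~\ref{ly1}, which were engineered to bound $Sy_u$ and $y_u$ respectively (for $u \geq p_{n+1}$) by $2d(x_n, x_{n-1})$ whenever $d(x_n, x_{n+1}) < d(x_n, x_{n-1})$. The only point requiring minor care is the edge case in Case I, ensuring $Sy_j = y_j$ there; this is precisely what part (i) of Lemma~\ref{la} delivers.
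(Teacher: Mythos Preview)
Your proof is correct and follows essentially the same approach as the paper's own proof: the same $\delta=\eps/4$, the same two-case split via Lemma~\ref{la} and the descent index $s$, and the same combination of Lemmas~\ref{ly1} and~\ref{l6t} to bound $d(x_s,y_n)$ and $d(x_s,Sy_n)$ before applying the triangle inequality.
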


\begin{proof}
Let $\eps>0$. We want an $M$ such that for all $n\geq M$, $d(y_n,Sy_n) \leq \eps$. Put $\delta:=\frac\eps4$. By Lemma~\ref{l5}, there is an $N$ such that for all $k \geq N$, $d(x_k,x_{k+1}) \leq \delta$.\\[2mm]

{\bf Case I.} We have that $x_N=x_{N+1}=x_{N+2}$.\\[1mm]

Then, by Lemma~\ref{la}, for all $j \geq p_{N+2}$, $y_j=x_N$ and $Sy_j=Sx_N=x_N=y_j$, so we may take $M:=p_{N+2}$.\\[2mm]

{\bf Case II.} There is a $k \in \{N,N+1\}$ such that $x_k \neq x_{k+1}$.\\[1mm]

Put $\rho:=d(x_k,x_{k+1})>0$. Again, by Lemma~\ref{l5}, there is a $p \geq k+1$ such that $d(x_p,x_{p+1}) \leq \frac\rho2$, so there is an $s \in [k+1,p]$ such that $d(x_s, x_{s+1})<d(x_s,x_{s-1})$. Thus, by Lemma~\ref{ly1}, for all $v \geq p_{s+1}$, $2d(x_s,x_{s-1}) \geq d(x_s,y_v)$. In addition, by Lemma~\ref{l6t}, for all $v \geq p_{s+1}$, $2d(x_s,x_{s-1}) \geq d(x_s,Sy_v)$.

On the other hand, since $s \geq k+1$, $s-1\geq k\geq N$, so $d(x_s,x_{s-1}) \leq \delta$ and for all $v \geq p_{s+1}$, $d(x_s,y_v) + d(x_s,Sy_v)\leq 4\delta$.

Put $M:=p_{s+1}$ and let $n\geq M$. Then $d(y_n,Sy_n)\leq d(x_s,y_v) + d(x_s,Sy_v) \leq 4\delta =\eps$.
\end{proof}

\section{Acknowledgements}

This work has been supported by the German Science Foundation (DFG Project KO 1737/6-1) and by a grant of the Romanian Ministry of Research, Innovation and Digitization, CNCS/CCCDI -- UEFISCDI, project number PN-III-P1-1.1-PD-2019-0396, within PNCDI III.

\end{document}